\newtheorem{theorem}{Theorem}[section]
\newtheorem{lemma}[theorem]{Lemma}
\theoremstyle{definition}
\providecommand{\customgenericname}{}
\newcommand{\newcustomtheorem}[2]{\newenvironment{#1}[1]
  {\renewcommand\customgenericname{#2}
   \renewcommand\theinnercustomgeneric{##1}\innercustomgeneric}{\endinnercustomgeneric}}
\newcommand{\newcustomlemma}[2]{\newenvironment{#1}[1]
  {\renewcommand\customgenericname{#2}
   \renewcommand\theinnercustomgeneric{##1} \innercustomgeneric}{\endinnercustomgeneric}}
\newcommand\relphantom[1]{\mathrel{\phantom{#1}}}
\newcommand{\nn}{\mathbb{N}}
\newcommand{\rd}{\mathbb{R}^d}
\newcommand{\zz}{\mathbb{Z}}
\newcommand{\zd}{\mathbb{Z}^d}
\def\xxi{\vec{\boldsymbol{\xi}}}
\def\|{{\boldsymbol{|}}}
\def\fff{\vec{\boldsymbol{f}}}
\def\xxx{\vec{\boldsymbol{x}}}
\def\yyy{\vec{\boldsymbol{y}}}
\newcommand{\wt}{\widetilde}
\newcommand{\wh}{\widehat}
\numberwithin{equation}{section}
\begin{document}

\begin{thanks}
{The author  is supported in part by NRF grant 2019R1F1A1044075 and by a KIAS Individual Grant MG070001 at Korea Institute for Advanced Study.}
\end{thanks}

\address{School of Mathematics \\
           Korea Institute for Advanced Study, Seoul\\
           Republic of Korea}
   \email{qkrqowns@kias.re.kr}

\author{Bae Jun Park}

\title[multilinear multiplier theorem with endpoint smoothness conditions]{On the failure of multilinear multiplier theorem with endpoint smoothness conditions}

\subjclass[2010]{42B15, 46E35}
\keywords{H\"ormander multiplier theorem, Multilinear operator, Sobolev space with sharp regularity conditions}

\begin{abstract} 
We study a multilinear version of the H\"ormander multiplier theorem, namely
\begin{equation*}
\Vert T_{\sigma}(f_1,\dots,f_n)\Vert_{L^p}\lesssim \sup_{k\in\mathbb{Z}}{\Vert \sigma(2^k\cdot,\dots,2^k\cdot)\widehat{\phi^{(n)}}\Vert_{L^{2}_{(s_1,\dots,s_n)}}}\Vert f_1\Vert_{H^{p_1}}\cdots\Vert f_n\Vert_{H^{p_n}}.
\end{equation*}
We show that the estimate does not hold in the limiting case $\min{(s_1,\dots,s_n)}=d/2$ or $\sum_{k\in J}{({s_k}/{d}-{1}/{p_k})}=-{1}/{2}$  for some $J \subset \{1,\dots,n\}$.
This provides the necessary and sufficient condition on $(s_1,\dots,s_n)$ for the boundedness of $T_{\sigma}$.
\end{abstract}

\maketitle

\section{Introduction}
Let $n$ be a positive integer and $\sigma$ be a bounded function on $(\rd)^n$. The $n$-linear multiplier operator $T_{\sigma}$ associated with $\sigma$ is defined by
\begin{equation*}
T_{\sigma}\big(f_1,\dots,f_n \big)(x):={\int_{(\mathbb{R}^d)^n}{\sigma(\xi_1,\dots,\xi_n)\Big(\prod_{j=1}^{n}\widehat{f_j}(\xi_j)\Big)e^{2\pi i\langle x,\sum_{j=1}^{n}{\xi_j} \rangle}}d\xi_1\cdots d\xi_n}
\end{equation*} 
for Schwartz functions $f_1,\dots,f_n$ in $S(\rd)$, where $\wh{f}(\xi):=\int_{\rd}{f(x)e^{-2\pi i\langle x,\xi\rangle}}dx$ is the Fourier transform of $f$.
The study on $L^{p_1}\times\cdots\times L^{p_n}\to L^p$ boundedness of $T_{\sigma}$, $1/p=1/p_1+\dots+1/p_n$, is one of principal questions in harmonic analysis as a multilinear extension of classical Fourier multiplier theorems and there have been many attempts to characterize $\sigma$ for which the boundedness holds.
A multilinear version of the H\"ormander multiplier theorem was studied by Tomita \cite{Tom} who obtained $L^{p_1}\times\cdots\times L^{p_n}\to L^p$ boundedness $(1<p_1,\dots,p_n,p<\infty)$ under the condition
\begin{equation}\label{hcondition}
\sup_{j\in\zz}{\big\Vert \sigma(2^j\cdot_1,\dots,2^j\cdot_n)\wh{\phi^{(n)}}\big\Vert_{L_s^2((\rd)^n)}}<\infty, \qquad s>nd/2.
\end{equation}
Here $L_s^2((\rd)^n)$ denotes the standard fractional Sobolev space on $(\rd)^n$ and ${\phi^{(n)}}$ is a Schwartz function on $(\rd)^n$ having the properties that
$Supp(\wh{\phi^{(n)}})\subset \{\xxi:=(\xi_1,\dots,\xi_n)\in (\rd)^n: 2^{-1}\leq |\xxi|\leq 2\}$ and $\sum_{k\in\zz}{\wh{\phi^{(n)}}(\xxi/2^k)}=1$ for $\xxi\not= 0$.
Grafakos and Si \cite{Gr_Si} extended the result of Tomita to the case $p\leq 1$, using $L^r$-based Sobolev spaces with $1<r\leq 2$. Fujita and Tomita \cite{Fu_Tom1} provided weighted extensions of these results  for $1<p_1,\dots,p_n,p<\infty$ with 
\begin{equation}\label{newhcondition}
\mathcal{L}_{(s_1,\dots,s_n)}^{2, (n)}[\sigma]:=\sup_{j\in\zz}{\big\Vert \sigma(2^j\cdot_1,\dots,2^j\cdot_n)\wh{\phi^{(n)}}\big\Vert_{L_{(s_1,\dots,s_n)}^2((\rd)^n)}}<\infty, 
\end{equation}
for $s_1,\dots,s_n>d/2$, instead of (\ref{hcondition}), where $L_{(s_1,\dots,s_n)}^{2}((\rd)^n)$ is a product-type Sobolev space with the norm
\begin{equation*}
\Vert f\Vert_{L_{(s_1,\dots,s_n)}^{2}((\rd)^n)}:=\Big(\int_{(\rd)^n}{\Big(\prod_{j=1}^{n}(1+4\pi^2|x_j|^2)^{s_j}\Big)|\wh{f}(x_1,\dots,x_n)|^2}dx_1\cdots dx_n \Big)^{1/2}.
\end{equation*}
The range of $p$ in this result was extended by Grafakos, Miyachi, and Tomita \cite{Gr_Mi_Tom}. 
In \cite{Mi_Tom} Miyachi and Tomita obtained minimal conditions of $s_1,s_2$ in (\ref{newhcondition})
for the $H^{p_1}\times H^{p_2} \to L^p$ boundedness of the bilinear operator $(n=2)$ and more recently, 
Grafakos and Nguyen \cite{Gr_Ng} and Grafakos, Miyachi, Nguyen, and Tomita \cite{Gr_Mi_Ng_Tom} generalize the result of bilinear operators to $n$-linear cases for $n\geq 3$.

\begin{customthm}{A}\label{previousmulti0}
Let $0<p_1,\dots,p_n\leq \infty$, $0<p<\infty$, $1/p_1+\dots+1/p_n=1/p$, and suppose that 
\begin{equation}\label{minimal}
s_1,\dots,s_n>d/2, \qquad \sum_{k\in J}{\Big(\frac{s_k}{d}-\frac{1}{p_k}\Big)}>-\frac{1}{2}
\end{equation} for every nonempty subset $J\subset \{1,2,\dots,n\}$. If $\sigma$ satisfies (\ref{newhcondition}), then we have
\begin{equation}\label{boundresult}
\Vert T_{\sigma}\Vert_{H^{p_1}\times\cdots \times H^{p_n}\to L^p}\lesssim \mathcal{L}_{(s_1,\dots,s_n)}^{2, (n)}[\sigma].
\end{equation}
\end{customthm}
We also refer to \cite{Park3, Park4} for $BMO$ extensions of the result.

The optimality of (\ref{minimal}) was also studied in \cite{Gr_Mi_Ng_Tom, Gr_Ng, Mi_Tom} and indeed, if (\ref{boundresult}) holds, then we must necessarily have 
\begin{equation*}
s_1,\dots,s_n\geq d/2, \qquad \sum_{k\in J}{\Big(\frac{s_k}{d}-\frac{1}{p_k}\Big)}\geq-\frac{1}{2}
\end{equation*} for every nonempty subset $J\subset \{1,2,\dots,n\}$.
The proof is based on a scaling argument by constructing two different multipliers $\sigma^{(\epsilon)}_1$ and $\sigma_2^{(\epsilon)}$ such that for $1\leq m\leq n$,
\begin{equation*}
\mathcal{L}_{(s_1,\dots,s_n)}^{2,(n)}[\sigma_1^{(\epsilon)}]\lesssim \epsilon^{d/2-s_1} ~~ \text{ and }~~ \Vert T_{\sigma_{1}^{(\epsilon)}}\Vert_{H^{p_1}\times\cdots\times H^{p_n}\to L^p}\gtrsim 1 ~~\text{uniformly in } ~\epsilon,
\end{equation*}
\begin{equation*}
\mathcal{L}_{(s_1,\dots,s_n)}^{2,(n)}[\sigma_2^{(\epsilon)}]\lesssim \epsilon^{d/2-(s_1+\dots+s_m)} ~~ \text{ and }~~ \Vert T_{\sigma_{2}^{(\epsilon)}}\Vert_{H^{p_1}\times\cdots\times H^{p_n}\to L^p}\gtrsim \epsilon^{d-d(1/p_1+\dots+1/p_m)}.
\end{equation*}

However, (\ref{boundresult}) still remains unknown in the critical case 
\begin{equation*}
\min{(s_1,\dots,s_n)}=d/2 \quad \text{ or }\quad \sum_{k\in J}{\Big(\frac{s_k}{d}-\frac{1}{p_k}\Big)}=-\frac{1}{2} \quad \text{for some }~J\subset \{1,\dots,n\}.
\end{equation*}
  In this paper we shall address this question so that the necessary and sufficient conditions on $(s_1,\dots,s_n)$ for (\ref{boundresult}) are completely achieved.
  
Our main result is the following.
\begin{theorem}\label{main}
Let $0<p_1,\dots,p_n\leq \infty$, $0<p<\infty$, $1/p_1+\dots+1/p_n=1/p$. Suppose $(\ref{newhcondition})$ holds for $s_1,\dots,s_n>0$.
Then (\ref{boundresult}) does not hold if
\begin{equation}\label{ffcondition}
\min{(s_1,\dots,s_n)}\leq d/2
\end{equation} or
\begin{equation*}
\sum_{k\in J}{\Big(\frac{s_k}{d}-\frac{1}{p_k}\Big)}\leq-\frac{1}{2} \quad \text{ for some }~J\subset \{1,\dots,n\}.
\end{equation*} 
\end{theorem}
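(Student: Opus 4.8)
The plan is to treat the two conditions in \eqref{ffcondition} and its companion separately, in each case producing a family of multipliers $\sigma$ (together with test functions $f_1,\dots,f_n$) along which $\mathcal{L}^{2,(n)}_{(s_1,\dots,s_n)}[\sigma]$ and $\prod_j\Vert f_j\Vert_{H^{p_j}}$ stay bounded while $\Vert T_\sigma(f_1,\dots,f_n)\Vert_{L^p}$ diverges. For the first condition, after relabelling we may assume $s_1=\min(s_1,\dots,s_n)\le d/2$, and the point is to reduce matters to the classical failure of the Sobolev embedding $L^2_{s_1}(\rd)\hookrightarrow L^\infty(\rd)$, which holds precisely because $s_1\le d/2$.

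Concretely, fix $e\in\rd$ with $|e|=1$ and a fixed Schwartz bump $\psi$ with $\psi\equiv1$ on $B(e,\delta_0/2)$ and $\mathrm{Supp}\,\psi\subset B(e,\delta_0)$ for a small fixed $\delta_0<1$. For a smooth $\tau\ge0$ supported in $B(e,\delta_0/2)$ consider the tensor-product multiplier $\sigma(\xi_1,\dots,\xi_n):=\tau(\xi_1)\prod_{j=2}^n\psi(\xi_j)$; since it is a tensor product the operator factors pointwise, $T_\sigma(f_1,\dots,f_n)=T_\tau(f_1)\cdot\prod_{j=2}^n T_\psi(f_j)$. As inputs take $\widehat{f_j}(\xi):=\rho^{-d/p_j'}\chi((\xi-e)/\rho)$ for $j=1,\dots,n$, where $\chi\ge0$ is a fixed bump with $\chi\equiv1$ near $0$ and $\rho$ is small; these Fourier supports lie in $B(e,\rho)$, away from the origin, so the $f_j$ have all moments vanishing and a direct maximal-function estimate gives $\Vert f_j\Vert_{H^{p_j}}\lesssim1$ uniformly in $\rho$, while $\psi\equiv1$ on $B(e,\rho)$ forces $T_\psi(f_j)=f_j$. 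If in addition $\tau\ge M$ on a ball of radius $\sim\rho$ near $e$ (which we may arrange with $M$ comparable to $\Vert\tau\Vert_\infty$), then on the ball $|x|\lesssim1/\rho$ one computes $|T_\tau(f_1)(x)|\gtrsim M\rho^{d/p_1}$ and $|f_j(x)|\gtrsim\rho^{d/p_j}$, hence $|T_\sigma(f_1,\dots,f_n)(x)|\gtrsim M\rho^{d/p}$, and integrating over this ball (whose volume is $\sim\rho^{-d}$) the powers of $\rho$ cancel to yield $\Vert T_\sigma(f_1,\dots,f_n)\Vert_{L^p}\gtrsim M$. On the other hand, since $\mathrm{Supp}\,\sigma$ meets only boundedly many dyadic shells and since the product-type Sobolev norm of a tensor product factors (and multiplication by the fixed Schwartz function $\widehat{\phi^{(n)}}$ is bounded on $L^2_{(s_1,\dots,s_n)}$), one gets $\mathcal{L}^{2,(n)}_{(s_1,\dots,s_n)}[\sigma]\lesssim\Vert\tau\Vert_{L^2_{s_1}(\rd)}$. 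Thus \eqref{boundresult} would force $\Vert\tau\Vert_\infty\lesssim\Vert\tau\Vert_{L^2_{s_1}(\rd)}$ for all smooth nonnegative $\tau$ supported near $e$; taking $\tau=\tau_N$ a lacunary sum $\sum_{j=1}^N\lambda_j\,\eta(2^j(\cdot-e))$ with suitable weights $\lambda_j$ (so that $\Vert\tau_N\Vert_{L^2_{s_1}}\lesssim1$ but $\tau_N(e)\to\infty$), which exists exactly because $s_1\le d/2$, gives the contradiction. This argument covers the whole range $\min(s_1,\dots,s_n)\le d/2$ at once, recovering in particular the already-known strict case.

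For the second condition, fix a nonempty $J$ with $\sum_{k\in J}(s_k/d-1/p_k)\le-1/2$; the strict inequality is the known scaling obstruction furnished by the family $\sigma^{(\epsilon)}_2$ of \cite{Mi_Tom,Gr_Ng,Gr_Mi_Ng_Tom}, whose ratio scales like $\epsilon^{d(1/2+\sum_{j}(s_j/d-1/p_j))}$ and hence is merely \emph{balanced} at the equality $\sum_{j\in J}(s_j/d-1/p_j)=-1/2$. Here I would build the counterexample directly in $n$ variables, with the coordinates in $J$ playing the role of the dilated ones and the remaining coordinates treated as spectators carrying fixed bumps: superpose the normalised scaled examples $\widetilde\sigma^{(2^{-\ell})}$, $\ell=1,\dots,N$, chosen so that for different $\ell$ the relevant frequency shells are disjoint, whence $\mathcal{L}^{2,(n)}_{(s_1,\dots,s_n)}\big[\sum_\ell\widetilde\sigma^{(2^{-\ell})}\big]\lesssim\sup_\ell\mathcal{L}^{2,(n)}_{(s_1,\dots,s_n)}[\widetilde\sigma^{(2^{-\ell})}]\lesssim1$; test against the matching superposed inputs $f_j=\sum_\ell f_{j,\ell}$, built atom-like so that the cross-terms $T_{\widetilde\sigma^{(2^{-\ell})}}(f_{1,\ell'},\dots,f_{n,\ell'})$ vanish whenever $\ell\ne\ell'$; and extract from the superposition a factor growing with $N$ (of the order of $\log N$, or a power of $N$) coming from the mismatch between the $H^{p_j}$-atom normalisation of the pieces $f_{j,\ell}$ and the $L^p$-size of the outputs at the critical scaling. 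Letting $N\to\infty$ then contradicts \eqref{boundresult}.

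I expect the equality case of the second condition to be the main obstacle. By construction the endpoint is exactly where a single scaled example neither confirms nor violates \eqref{boundresult}, so a gain can only be produced by genuinely exploiting the gap between $H^{p_j}$ and $L^{p_j}$ — note that at equality, if all $s_j>d/2$ then necessarily $p_j<1$ for some $j$, so this gap is nontrivial — say by spreading the atoms $f_{j,\ell}$ over well-chosen translates or by using cancellation among them; the delicacy is to do this while keeping $\mathcal{L}^{2,(n)}_{(s_1,\dots,s_n)}[\sigma]$ bounded, which constrains how the frequency shells of the superposed pieces may be arranged and forces a careful simultaneous bookkeeping of the $H^{p_j}$-norms of the inputs and the $L^p$-norm of the output. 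Checking that the mismatched cross-terms really do vanish and that the spectator tensoring in both parts does not disturb the $\mathcal{L}$-norm are the remaining routine verifications.
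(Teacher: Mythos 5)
Your argument for the first condition is essentially correct and, while the underlying principle is the same as the paper's (boundedness of $T_\sigma$ tested on frequency-concentrated inputs forces a pointwise bound on the symbol near a fixed frequency, contradicting the failure of $L^2_{s_1}(\rd)\hookrightarrow L^\infty$ when $s_1\le d/2$), your construction is genuinely different: you blow the symbol up at a point by a lacunary sum of shrinking bumps $\sum_{j\le N}j^{-1}\eta(2^j(\cdot-e))$ with $\rho\sim 2^{-N}$, whereas the paper keeps a fixed profile and sums modulations $e^{2\pi i\langle\lambda_m,\xi_1-e_1\rangle}$ over spread-out lattice points with weights $m^{-1}(\ln m)^{-\delta}$, extracting the divergence via Fatou as $\epsilon\to0$ and then $N\to\infty$. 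What remains for you to check is routine: the quasi-orthogonality estimate giving $\Vert\tau_N\Vert_{L^2_{d/2}}\lesssim1$ although $\sum_{j\le N}1/j\to\infty$, and the uniform $H^{p_j}$ bound for the modulated rescaled bumps (your normalisation $\rho^{-d/p_j'}$ should be read as $\rho^{d/p_j-d}$ for $p_j\le1$, which is exactly the paper's scaling in (\ref{secondest})).

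For the second condition there is a genuine gap, and it is exactly the one you flag yourself. Superposing $N$ critically scaled copies $\widetilde\sigma^{(2^{-\ell})}$ on disjoint dyadic shells and testing with $f_j=\sum_\ell f_{j,\ell}$ produces no growth at the endpoint: the natural bookkeeping gives $\Vert f_j\Vert_{H^{p_j}}\approx N^{1/p_j}$ and an output of size at most $N^{1/p}=N^{1/p_1+\dots+1/p_n}$, so the ratio is $N^0$ — the same exact balance you already observed for a single scaled piece. Your proposed remedies (``spreading the atoms over well-chosen translates or using cancellation among them'') are a wish rather than a construction; no mechanism is exhibited that creates a divergent factor while keeping $\mathcal{L}^{2,(n)}_{(s_1,\dots,s_n)}[\sigma]$ bounded, and this is precisely the heart of the endpoint case. (Also, your side remark that equality forces some $p_j<1$ is not correct in general: for $|J|=2$ one only gets $1/p_1+1/p_2>3/2$, which allows $p_1=p_2=4/3$.) The paper does not superpose scale-invariant pieces at all: it builds a single multiplier from the power-logarithmic kernel $\mathcal{H}^{(m)}(x)=(1+4\pi^2|x|^2)^{-\frac12(s_1+\dots+s_m+\frac d2)}\,(1+\ln(1+4\pi^2|x|^2))^{-\tau/2}$, a variant of a Bessel potential, transferred to $m$ variables by the linear change of variables $\zeta_1=\frac1m\sum_l(\xi_l-\mu_1)$, $\zeta_j=\frac1m\sum_l(\xi_l-\xi_j)$, and tests it against inputs whose profiles are again power-log functions $h^{(j)}$. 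The entire endpoint gain lives in the window for the logarithmic exponents: $\tau>1$ makes $\Vert\sigma\Vert_{L^2_{(s_1,\dots,s_n)}}$ finite, while $s_1,\dots,s_n>d/2$ together with the endpoint relation (\ref{equivcondition}) gives $m+1<2/p-(2/p_{m+1}+\dots+2/p_n)$, so one can choose $\tau,\tau_{m+1},\dots,\tau_n$ with $\tau+\tau_{m+1}+\dots+\tau_n<2/p$, which makes $\Vert T_\sigma(f_1,\dots,f_n)\Vert_{L^p}$ diverge. Some device of this kind — a logarithmic deficit from exact scale invariance rather than a superposition of exactly critical pieces — is what your sketch is missing, so as it stands the second half of the theorem is not proved.
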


In order to prove Theorem \ref{main}, two different multipliers will be constructed. For the first case (\ref{ffcondition}) we will use a more sophisticated scaling argument.
The proof of the other case relies on a variant of Bessel potentials that appeared in \cite{Gr_Park, Park2}.

\section{Proof of Theorem \ref{main}}

For notational convenience, we will occasionally write
$\fff:=(f_1,\dots,f_n)$, $\xxi:=(\xi_1,\dots,\xi_n)$, $\xxx:=(x_1,\dots,x_n)$, $\yyy:=(y_1,\dots,y_n)$, $d\xxx:=dx_1\cdots dx_n$, $d\yyy:=dy_1\cdots dy_n$.

\begin{lemma}\label{katoponce}
Let $g$ be a function in $L_{(s_1,\dots,s_n)}^{2}((\rd)^n)$. Then
\begin{equation*}
\big\Vert \wh{\phi^{(n)}} \cdot g \big\Vert_{L_{(s_1,\dots,s_n)}^{2}((\rd)^n)}\lesssim \Vert g \Vert_{L_{(s_1,\dots,s_n)}^{2}((\rd)^n)}.
\end{equation*}

\end{lemma}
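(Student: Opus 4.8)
The plan is to prove the product-type Sobolev space is closed under multiplication by the fixed Schwartz bump $\wh{\phi^{(n)}}$, which is a multilinear (iterated) analogue of the classical fact that the Bessel potential space $L^2_s(\rd)$ is a module over the Schwartz class. Since the weight $\prod_{j=1}^n (1+4\pi^2|x_j|^2)^{s_j}$ tensorizes across the $n$ blocks of variables, I would reduce matters to handling one factor at a time: view $L^2_{(s_1,\dots,s_n)}$ as an iterated Bessel potential space where the $j$-th potential $(I-\Delta_{x_j})^{s_j/2}$ acts only on the $x_j$ variable, and show that each such operator essentially commutes (up to bounded error) with multiplication by $\wh{\phi^{(n)}}$. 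Concretely, writing $J^{(j)}_{s_j}$ for the operator with Fourier multiplier $(1+4\pi^2|\xi_j|^2)^{s_j/2}$ in the $j$-th variable, the norm in question is $\|J^{(1)}_{s_1}\cdots J^{(n)}_{s_n}(\wh{\phi^{(n)}} g)\|_{L^2((\rd)^n)}$, and I want to bound this by $\|J^{(1)}_{s_1}\cdots J^{(n)}_{s_n} g\|_{L^2}$.

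The key step is a commutator/fractional-Leibniz estimate in each variable. For a single variable one uses the standard fact (a Kato–Ponce / Coifman–Meyer type bound, or just a direct kernel estimate exploiting that $\wh{\phi^{(n)}}$ is Schwartz) that
\begin{equation*}
\big\| J_{s}\big(\psi\, h\big)\big\|_{L^2} \lesssim_{\psi, s} \big\| J_s h\big\|_{L^2} + \|h\|_{L^2}
\end{equation*}
for $\psi$ Schwartz and any $s>0$, uniformly in $h$; in fact since $\wh{\phi^{(n)}}$ has compact support one can absorb the lower-order $\|h\|_{L^2}$ term. I would apply this successively: first peel off $J^{(1)}_{s_1}$, moving it past $\wh{\phi^{(n)}}$ at the cost of a Schwartz factor and a term controlled by the remaining $J^{(2)}_{s_2}\cdots J^{(n)}_{s_n}$ applied to $g$ times a Schwartz function; then iterate in the variables $x_2,\dots,x_n$. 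Because the bump $\wh{\phi^{(n)}}$ is supported where $2^{-1}\le |\xxi|\le 2$, all intermediate "Schwartz times $g$" quantities are supported in a fixed annulus, so on that support $(1+4\pi^2|x_j|^2)^{s_j}$ is comparable to $|x_j|^{2s_j}+1$ and everything stays under control; alternatively one simply notes $\|\psi g\|_{L^2}\le \|\psi\|_{L^\infty}\|g\|_{L^2}\le \|\psi\|_{L^\infty}\|g\|_{L^2_{(s_1,\dots,s_n)}}$ for the remainder terms.

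The main obstacle is bookkeeping rather than a deep difficulty: one must make sure the one-variable commutator estimate is genuinely uniform and that, when iterating, the "error" Schwartz functions produced at each stage (derivatives of $\wh{\phi^{(n)}}$ against $g$) are still dominated by the full mixed norm $\|g\|_{L^2_{(s_1,\dots,s_n)}}$ and not something larger. The cleanest route is probably to prove the single-variable statement once as a sublemma — either via the Fourier-side inequality $(1+|\xi+\eta|^2)^{s/2}\lesssim (1+|\xi|^2)^{s/2} + |\eta|^N (1+|\xi|^2)^{s/2}/(1+|\eta|^2)^{?}\cdots$ together with rapid decay of $\wh{\psi}$, or by the Kato–Ponce fractional Leibniz rule — and then quote it $n$ times, once per coordinate block, invoking Fubini to treat the other variables as frozen parameters at each stage. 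I would also remark that no restriction on the sign or size of $s_1,\dots,s_n$ is needed beyond $s_j>0$, which is exactly the generality in which the lemma is later used.
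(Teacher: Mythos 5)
Your one-variable module fact is true, but the mechanism you propose for promoting it to $n$ blocks has a genuine gap. The function $\wh{\phi^{(n)}}$ is Schwartz in all $nd$ variables \emph{jointly}, not a tensor product, and the weights $(1+4\pi^2|x_j|^2)^{s_j}$ act on the Fourier transform of the product, i.e.\ through the nonlocal operators $J^{(j)}_{s_j}$. If you freeze $x_2,\dots,x_n$ by Fubini and apply the scalar one-variable estimate in the first block, what you get is an inequality between $L^2_{x_1}$-norms for each frozen value of the remaining variables; you cannot afterwards apply $J^{(2)}_{s_2},\dots,J^{(n)}_{s_n}$ (nonlocal in exactly those frozen variables) to such a bound, and conversely $J^{(2)}_{s_2}\cdots J^{(n)}_{s_n}(\wh{\phi^{(n)}}g)$ is no longer of the form ``Schwartz in $x_1$ times a function'' to which the scalar sublemma could be applied. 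Likewise the commutator $[J^{(1)}_{s_1},\wh{\phi^{(n)}}]g$ is an operator applied to $g$, not literally a Schwartz factor times $J^{(2)}_{s_2}\cdots J^{(n)}_{s_n}g$. Repairing the induction would require a Hilbert-space--valued version of the one-variable lemma (multiplication by $\wh{\phi^{(n)}}(x_1,\cdot)$ viewed as an operator-valued symbol acting on $L^2_{(s_2,\dots,s_n)}$), which you never formulate. A further sign of trouble is the support remark: the compact support of $\wh{\phi^{(n)}}$ lives in the variable of the product $\wh{\phi^{(n)}}\cdot g$, whereas the weights $(1+4\pi^2|x_j|^2)^{s_j}$ are evaluated in the dual variable, where nothing is compactly supported; so that localization cannot be used the way you suggest.

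The fix is to drop the variable-by-variable reduction and run your ``Fourier-side'' alternative jointly in all $n$ blocks, which is exactly what the paper does in a few lines: since $(\wh{\phi^{(n)}}\cdot g)^{\vee}=\phi^{(n)}\ast g^{\vee}$, Minkowski's integral inequality gives
\begin{equation*}
\big\Vert \wh{\phi^{(n)}}\cdot g\big\Vert_{L^{2}_{(s_1,\dots,s_n)}}\leq \int_{(\rd)^n}\big|\phi^{(n)}(\yyy)\big|\Big(\int_{(\rd)^n}\Big(\prod_{j=1}^{n}(1+4\pi^2|x_j|^2)^{s_j}\Big)\big|g^{\vee}(\xxx-\yyy)\big|^2\,d\xxx\Big)^{1/2}d\yyy,
\end{equation*}
and the Peetre inequality $(1+4\pi^2|x_j|^2)^{s_j}\lesssim(1+4\pi^2|x_j-y_j|^2)^{s_j}(1+4\pi^2|y_j|^2)^{s_j}$ in each block, together with the rapid decay of $\phi^{(n)}$ to absorb $\prod_j(1+4\pi^2|y_j|^2)^{s_j/2}$, yields the bound by $\Vert g\Vert_{L^2_{(s_1,\dots,s_n)}}$. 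No commutators, no iteration, and no vector-valued machinery are needed; the constant depends only on finitely many Schwartz seminorms of $\phi^{(n)}$ and the argument works for all $s_j\ge 0$ (indeed with $|s_j|$ in general).
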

\begin{proof}
By using the Minkowski inequality, we obtain that
\begin{align*}
&\big\Vert \wh{\phi^{(n)}} \cdot g\big\Vert_{L_{(s_1,\dots,s_n)}^{2}((\rd)^n)}=\Big( \int_{(\rd)^n}{\Big( \prod_{j=1}^{n}\big(1+4\pi^2|x_j|^2\big)^{s_j}\Big)\big|\phi^{(n)}\ast g^{\vee}(\xxx) \big|^2}d\xxx\Big)^{1/2}\\
&\leq \int_{(\rd)^n}{\phi^{(n)}(\yyy)\Big(\int_{(\rd)^n}{ \Big( \prod_{j=1}^{n}\big(1+4\pi^2|x_j|^2\big)^{s_j}\Big)\big| g^{\vee}(\xxx-\yyy)\big|^2 }d\xxx \Big)^{1/2}}d\yyy
\end{align*}
and then using $\big(1+4\pi^2|x_j|^2\big)^{s_j}\lesssim \big(1+4\pi^2|x_j-y_j|^2\big)^{s_j}\big(1+4\pi^2|y_j|^2\big)^{s_j}$, the last expression is dominated by a constant multiple of $\Vert g \Vert_{L_{(s_1,\dots,s_n)}^{2}((\rd)^n)}$.

\end{proof}

{\bf Case 1 :} Suppose that  $\min{(s_1,\dots,s_n)}\leq d/2$.
 
Let $\{\lambda_1,\lambda_2,\dots\}$ be a sequence of disjoint lattices in $\mathbb{Z}^d$ such that 
\begin{eqnarray}\label{secondition}
|\lambda_m|\leq \sqrt{d}m^{1/d}.
\end{eqnarray} 
One way to select such a sequence is as follows.
For each $k\in\nn$ let $$\lambda_{k^d}:=(k,0,0,\dots,0)\in \zd.$$
We observe that there are at most $d(k+1)^{d-1}$ integers between $k^d$ and $(k+1)^{d}$, and
there exist at least $2d(2k-1)^{d-1}$ lattices on the surface of cube $[-k,k]^d$.
Since $d(k+1)^{d-1}\leq 2d(2k-1)^{d-1}$ we can choose lattices $\lambda_{k^d+1},\lambda_{k^d+2},\dots,\lambda_{(k+1)^d-1}$ on the surface of the cube and then clearly the length of those lattices is less than $\sqrt{d}k$, which yields (\ref{secondition}).
 
 It is enough to consider the case  $s_1\leq s_2,\dots,s_n$ and $s_1\leq d/2$ as other cases will follow from a rearrangement.

Let $\eta$ and $\wt{\eta}$ denote Schwartz functions on $\rd$ having the properties that $\eta\geq 0$, $\eta(x)\geq c$ on $\{x\in\rd:|x|\leq \frac{1}{100}\}$ for some $c>0$, $Supp(\wh{\eta})\subset \{\xi\in\rd: |\xi|\leq \frac{1}{1000}\}$, $\wh{\wt{\eta}}(\xi)=1$ for $|\xi|\leq \frac{1}{1000}$, and $Supp(\wh{\wt{\eta}})\subset \{\xi\in\rd: |\xi|\leq \frac{1}{100}\}$. 
We further denote by $\theta$ and $\wt{\theta}$ two Schwartz functions on $\rd$ such that $Supp(\wh{\theta})\subset \big\{\xi\in\rd: \frac{1}{2000\sqrt{n}}\leq |\xi|\leq \frac{1}{1000\sqrt{n}}\big\},$ $Supp(\wh{\wt{\theta}})\subset \big\{\xi\in\rd:  |\xi|\leq \frac{1}{100\sqrt{n}}\big\},$ and $\wh{\wt{\theta}}(\xi)=1$  for $ |\xi|\leq \frac{1}{1000\sqrt{n}}.$
Then it is clear that  $\eta\ast \wt{\eta}=\eta$ and $\theta\ast \wt{\theta}=\theta$. Let $e_1:=(1,0,\dots,0)\in \zd$.

Let $1/2<\delta\leq 1$ and set $N>100$ to be a sufficiently large number.
We define
\begin{equation*}
\sigma^{(N)}(\xxi):=\sum_{m=100}^{N}{\frac{1}{m}\frac{1}{(\ln{m})^{\delta}}e^{2\pi i\langle \lambda_m,\xi_1-e_1\rangle}\wh{\wt{\eta}}(\xi_1-e_1)\wh{\wt{\theta}}(\xi_2)\cdots\wh{\wt{\theta}}(\xi_n)}.
\end{equation*}
Then it follows from the supports of $\wh{\wt{\eta}}$ and $\wh{\wt{\theta}}$ that $\sigma^{(N)}$ is supported in $\{\xxi\in (\rd)^n: \frac{99}{100} \leq |\xxi| \leq\frac{51}{50}\}$, which implies that $\sigma^{(N)}(2^j\xxi)\wh{\phi^{(n)}}(\xxi)$ vanishes unless $-1\leq j\leq 1$.
Moreover, due to Lemma \ref{katoponce} we have
\begin{equation*}
\mathcal{L}_{(s_1,\dots,s_n)}^{2,(n)}[\sigma^{(N)}]\lesssim \max_{-1\leq j\leq 1}{\big\Vert \sigma^{(N)}(2^j\cdot_1,\dots,2^j\cdot_n)\big\Vert_{L_{(s_1,\dots,s_n)}^{2}((\rd)^n)}},
\end{equation*} which is further estimated, using scaling, by a constant times
\begin{align*}
&\Vert \sigma^{(N)}\Vert_{L_{(s_1,\dots,s_n)}^2((\rd)^n)}\\
&\lesssim \Big\Vert \sum_{m=100}^{N}{\frac{1}{m}\frac{1}{(\ln{m})^{\delta}}e^{2\pi i\langle \lambda_m,\cdot-e_1\rangle}\wh{\wt{\eta}}(\cdot-e_1)}\Big\Vert_{L_{s_1}^2(\rd)}\\
&=\Big( \int_{\rd}{\big( 1+4\pi^2|x|^2\big)^{s_1}\Big(\sum_{m=100}^{N}{\frac{1}{m}\frac{1}{(\ln{m})^{\delta}}\big|\wt{\eta}(\lambda_m-x)\big|} \Big)^{2}}dx\Big)^{1/2}\\
&\lesssim_M\Big( \int_{\rd}{\big( 1+4\pi^2|x|^2\big)^{s_1}\Big(\sum_{m=100}^{\infty}{\frac{1}{m}\frac{1}{(\ln{m})^{\delta}}\frac{1}{(1+|x-\lambda_m|)^M}} \Big)^{2}}dx\Big)^{1/2}
\end{align*} for sufficiently large $M>0$.

Let \begin{equation*}
E_0:=\big\{x\in\rd: |x|\leq 1 \big\},
\end{equation*}
\begin{equation*}
E_j:=\big\{x\in\rd: 2^{j-1}<|x|\leq 2^j \big\}, \qquad j\geq 1.
\end{equation*}
 Then the preceding expression is less than a constant times that
 \begin{equation*}
 \Big(\sum_{j=0}^{\infty}{2^{2js_1}\int_{E_j}{\Big(\sum_{m=100}^{\infty}{\frac{1}{m}\frac{1}{(\ln{m})^{\delta}}\frac{1}{(1+|x-\lambda_m|)^M}} \Big)^{2}}dx} \Big)^{1/2}
 \end{equation*}
and the sum over $m$ can be written as 
\begin{equation*}
\sum_{m\geq 100: 2^j\geq 10\sqrt{d}m^{1/d}}{\cdots} + \sum_{m:2^j< 10\sqrt{d}m^{1/d}}{\cdots} =:\mathcal{I}^{(j)}+\mathcal{J}^{(j)}.
\end{equation*}
Choose $M>d$ and we see that for $x\in E_j$,
\begin{equation*}
\mathcal{I}^{(j)}\lesssim 2^{-jM}(1+j)^{1-\delta}\lesssim 2^{-jd}(1+j)^{-\delta}
\end{equation*}
and \begin{equation*}
\mathcal{J}^{(j)}\lesssim 2^{-jd}(1+j)^{-\delta}\sum_{m=100}^{\infty}{\frac{1}{(1+|x-\lambda_m|)^M}}\lesssim 2^{-jd}(1+j)^{-\delta}
\end{equation*}
where the last inequality holds since $\lambda_m$'s are disjoint lattices in $\zd$.

Combining these estimates, we conclude that
\begin{align}\label{firstest}
\mathcal{L}_{(s_1,\dots,s_n)}^{2,(n)}[\sigma^{(N)}]&\lesssim \Big( \sum_{j=0}^{\infty}{2^{-j(d-2s_1)}(1+j)^{-2\delta}}\Big)^{1/2}\nonumber\\
&\leq \Big(\sum_{j=0}^{\infty}{(1+j)^{-2\delta}} \Big)^{1/2}\lesssim 1,\qquad \text{ uniformly in }~ N
\end{align} since $|E_j|\approx 2^{jd}$, $s_1\leq d/2$, and $2\delta>1$.

On the other hand, for $0<\epsilon<1/100$, let
\begin{equation*}
f^{(\epsilon)}_1(x):=\epsilon^{d/p_1}\eta(\epsilon x)e^{2\pi i\langle x,e_1\rangle}, \qquad f^{(\epsilon)}_j(x)::=\epsilon^{d/p_j}{\theta}(\epsilon x), ~~j=2,\dots,n
\end{equation*} 
Then it is clear that for each $1\leq j\leq n$
\begin{equation}\label{secondest}
\big\Vert f_j^{(\epsilon)}\big\Vert_{H^{p_j}(\rd)}\lesssim 1,\qquad \text{ uniformly in }~~\epsilon.
\end{equation}

We now assume toward a contradiction that for $\fff^{(\epsilon)}:=(f_1^{(\epsilon)},\dots,f_n^{(\epsilon)})$
\begin{equation*}
\big\Vert T_{\sigma^{(N)}}\fff^{(\epsilon)}\big\Vert_{L^p(\rd)}\lesssim \sup_{j\in\zz}{\big\Vert \sigma^{(N)}(2^j\cdot)\wh{\phi^{(n)}}\big\Vert_{L_{(s_1,\dots,s_n)}^{2,(n)}((\rd)^n)}}\prod_{j=1}^{n}{\Vert f_j^{(\epsilon)}\Vert_{H^{p_j}(\rd)}}.
\end{equation*}
Then the estimates (\ref{firstest}) and (\ref{secondest}) yield that
\begin{equation}\label{contradiction}
\big\Vert T_{\sigma^{(N)}}\fff^{(\epsilon)}\big\Vert_{L^p(\rd)}\lesssim 1, \qquad \text{ uniformly in }~~ \epsilon,~ N.
\end{equation}

Note that
\begin{equation*}
T_{\sigma^{(N)}}\fff^{(\epsilon)}(x)=\epsilon^{d/p}\big( \theta(\epsilon x )\big)^{n-1}\sum_{m=100}^{N}{\frac{1}{m}\frac{1}{(\ln{m})^{\delta}}\eta(\epsilon x+\epsilon \lambda_m)e^{2\pi i\langle x,e_1\rangle}}
\end{equation*}
since $\wh{\eta}( \xi/\epsilon)\wh{\wt{\eta}}(\xi)=\wh{\eta}( \xi/\epsilon)$, $\wh{\theta}( \xi/\epsilon)\wh{\wt{\theta}}(\xi)=\wh{\theta}( \xi/\epsilon)$ , and $1/p=1/p_1+\dots+1/p_n$.
 Thus we have 
\begin{align*}
\big\Vert T_{\sigma^{(N)}}\fff^{(\epsilon)}\big\Vert_{L^p(\rd)}=\Big\Vert |\theta|^{n-1}\sum_{m=100}^{N}{\frac{1}{m}\frac{1}{(\ln{m})^{\delta}}\eta(\cdot+\epsilon\lambda_m)}\Big\Vert_{L^p(\rd)}
\end{align*} by using a scaling argument.
Then (\ref{contradiction}) and the Fatou lemma yield that
\begin{align*}
1&\gtrsim \liminf_{\epsilon\to 0}{\big\Vert T_{\sigma^{(N)}}\fff^{(\epsilon)}\big\Vert_{L^p(\rd)}}\\
 &\geq \Big\Vert \liminf_{\epsilon\to 0} |\theta|^{n-1}\sum_{m=100}^{N}{\frac{1}{m}\frac{1}{(\ln{m})^{\delta}}\eta(\cdot+\epsilon\lambda_m)}\Big\Vert_{L^p(\rd)}\\
 &=\big\Vert |\eta| |\theta|^{n-1}\big\Vert_{L^p(\rd)}\sum_{m=100}^{N}{\frac{1}{m}\frac{1}{(\ln{m})^{\delta}}}\approx \sum_{m=100}^{N}{\frac{1}{m}\frac{1}{(\ln{m})^{\delta}}}.
\end{align*} 
Since the above estimate is independent of $N>100$, by taking $N\to \infty$, we see that
\begin{equation*}
1\gtrsim \sum_{m=100}^{N}{\frac{1}{m}\frac{1}{(\ln{m})^{\delta}}}\to \infty,
\end{equation*}
which finally leads to a contradiction and the desired result follows.\\

{\bf Case 2 :} Suppose that $s_1,\dots,s_n>d/2$ and $\sum_{k\in J}{\big({s_k}/{d}-{1}/{p_k}\big)}\leq-{1}/{2}$ for some $J\subset \{1,\dots,n\}$. 
Without loss of generality, we may assume $J=\{1,\dots,m\}$ for some $1\leq m\leq n$, by using a rearrangement argument.

We first consider the case $1\leq m<n$.
Then the condition 
\begin{equation*}
\sum_{k=1}^{m}{\big({s_k}/{d}-{1}/{p_k}\big)}\leq-{1}/{2}
\end{equation*} is equivalent to
\begin{equation}\label{equivcondition}
s_1+\dots +s_m+d/2 \leq d/p_1+\dots+d/p_m=d/p-\big(d/p_{m+1}+\dots+d/p_n).
\end{equation}
On the other hand, we observe from $s_j>d/2$, $1\leq j\leq n$, that
\begin{equation*}
s_1+\dots+s_m+d/2>(m+1)d/2,
\end{equation*} which further implies that
\begin{equation*}
m+1<2/p-\big(2/p_{m+1}+\dots+2/p_n \big).
\end{equation*}
Now we choose $\tau, \tau_{m+1},\dots, \tau_n>0$ such that
\begin{equation*}
\tau_{m+1}>2/p_{m+1},\dots,\tau_n>2/p_n
\end{equation*}
and
\begin{equation}\label{taucondition}
1<\tau<m+1<2/p-(\tau_{m+1}+\dots+\tau_n)<2/p-(2/p_{m+1}+\dots+2/p_n).
\end{equation}

Let $\psi,\wt{\psi}\in S(\rd)$ satisfy $\psi\geq 0$, $\psi(0)\not= 0$, $Supp(\wh{\psi})\subset \{\xi\in\rd: |\xi|\leq \frac{1}{200mn}\}$, $Supp(\wh{\wt{\psi}})\subset \{\xi\in\rd: |\xi|\leq \frac{1}{100n}\}$, and $\wh{\wt{\psi}}(\xi)=1$ for $|\xi|\leq \frac{1}{200n}$.  

We define
\begin{equation*}
\mathcal{H}^{(m)}(x):=\frac{1}{(1+4\pi^2|x|^2)^{\frac{1}{2}(s_1+\dots+s_m+\frac{d}{2})}}\frac{1}{(1+\ln{(1+4\pi^2|x|^2)})^{\frac{\tau}{2}}}, \quad x\in\rd,
\end{equation*}
\begin{equation*}
K^{(m)}(x):=\mathcal{H}^{(m)}\ast \psi(x), \quad x\in\rd,
\end{equation*}
and
\begin{equation*}
M^{(m)}(\xi_1,\dots,\xi_m):=\wh{K^{(m)}}\Big(\frac{1}{m}\sum_{l=1}^{m}{(\xi_l-\mu_1)} \Big)  \prod_{j=2}^{m} \wh{\psi}\Big(\frac{1}{m}\sum_{l=1}^{m}{(\xi_l-\xi_j)} \Big)
\end{equation*}
where $\mu_1:=(n^{-1/2},0,\dots,0)\in\mathbb{R}^d$. Here, $M^{(m)}$ is defined on $(\rd)^m$.
Then the multiplier $\sigma$ on $(\rd)^n$ is defined by
\begin{equation*}
\sigma(\xi_1,\dots,\xi_n):=M^{(m)}(\xi_1,\dots,\xi_m)\wh{\wt{\psi}}(\xi_{m+1}-\mu_1)\cdots \wh{\wt{\psi}}(\xi_{n}-\mu_1).
\end{equation*}
To investigate the support of $\sigma$ we first look at the support of $M^{(m)}$.
From the support $\wh{\psi}$, we have
\begin{equation*}
\big| \xi_1+\dots+\xi_m-m\mu_1\big|\leq \frac{1}{200n},
\end{equation*} and for each $2\leq j\leq m$
\begin{equation}\label{jest}
\big| \xi_1+\dots+\xi_m-m\xi_j\big|\leq \frac{1}{200n}.
\end{equation}
By adding up all of them, we obtain 
\begin{equation}\label{1est}
\big| \xi_1-\mu_1\big|\leq \frac{1}{200n}
\end{equation}
and the sum of (\ref{jest}) and (\ref{1est}) yields that for each $2\leq j\leq m$
\begin{equation*}
\big| \mu_1+\xi_2+\dots+\xi_m-m\xi_j\big|\leq \frac{1}{100n}.
\end{equation*}
Let us call the above estimate $\mathcal{E}(j)$.
Then for  $2\leq j\leq m$, it follows from  $$\mathcal{E}(j)+\sum_{l=2}^{m}\mathcal{E}(l)$$
that 
\begin{equation*}
\big|\xi_j-\mu_1\big|\leq \frac{1}{100n},
\end{equation*} which proves, together with (\ref{1est}),
\begin{align}\label{supportm}
 Supp(M^{(m)})&\subset \big\{ (\xi_1,\dots,\xi_m)\in (\rd)^m:   |\xi_j-\mu_1|\leq \frac{1}{100n}, ~ 1\leq j\leq m \big\}.
\end{align}
Since $\wh{\wt{\psi}}$ is also supported in $\{\xi\in\rd: |\xi|\leq \frac{1}{100{n}}\}$, it is clear that 
\begin{align*}
Supp(\sigma)&\subset  \big\{ (\xi_1,\dots,\xi_n)\in (\rd)^n:   |\xi_j-\mu_1|\leq \frac{1}{100n}, ~ 1\leq j\leq n \big\}\\
&\subset \big\{\xxi:=(\xi_1,\dots,\xi_n)\in (\rd)^n:    \frac{99}{100}\leq |\xxi|\leq \frac{101}{100}     \big\},
\end{align*}  which implies that $\sigma(2^j\xxi )\wh{\phi^{(n)}}(\xxi)$ vanishes unless $-1\leq j\leq 1$.
Furthermore, using Lemma \ref{katoponce} and the scaling argument we used in {\bf Case 1},
\begin{equation*}
\mathcal{L}_{(s_1,\dots,s_n)}^{2,(n)}[\sigma]\lesssim \Vert \sigma\Vert_{L_{(s_1,\dots,s_n)}^{2}((\rd)^n)}.
\end{equation*}

We notice that $\Vert \sigma\Vert_{L_{(s_1,\dots,s_n)}^{2}((\rd)^n)}$ is dominated by a constant times
\begin{align}\label{sigmaest}
& \big\Vert M^{(m)}\big\Vert_{L_{(s_1,\dots,s_m)}^{2}((\rd)^m)}\prod_{j=m+1}^{n}{\big\Vert \wh{\wt{\psi}}\big\Vert_{L_{s_j}^{2}(\rd)}}\lesssim \big\Vert M^{(m)}\big\Vert_{L_{(s_1,\dots,s_m)}^{2}((\rd)^m)}\nonumber\\
&=\Big( \int_{(\rd)^m}{\Big( \prod_{j=1}^{m}{(1+4\pi^2|x_j|^2)^{s_j}}\Big)\big|(M^{(m)})^{\vee}(x_1,\dots,x_m)\big|^2}dx_1\dots dx_m\Big)^{1/2}.
\end{align}
By using a change of variables
\begin{equation*}
\zeta_1:=\frac{1}{m}\sum_{l=1}^{m}{(\xi_l-\mu_1)}, \qquad \text{ and }\qquad \zeta_j:=\frac{1}{m}\sum_{l=1}^{m}{(\xi_l-\xi_j)}, \quad 2\leq j\leq m,
\end{equation*}
so that
\begin{equation}\label{system}
\xi_1=\zeta_1+\dots+\zeta_m+\mu_1, \qquad \text{ and }\qquad \xi_j=\zeta_1-\zeta_j+\mu_1, \quad 2\leq j\leq m,
\end{equation}
we can write
\begin{equation}\label{fmest}
\big|(M^{(m)})^{\vee}(x_1,\dots,x_m)\big|=m\big|K^{(m)}(x_1+\dots+x_m) \big|\prod_{j=2}^{m}{\big|\psi(x_1-x_j) \big|}
\end{equation} since the Jacobian of the system (\ref{system}) is $m$.

Consequently, (\ref{sigmaest}) is less than a constant multiple of 
\begin{equation*}
\Big(\int_{(\rd)^m}{\Big( \prod_{j=1}^{m}{(1+4\pi^2|x_j|^2)^{s_j}}\Big)\big|K^{(m)}(x_1+\dots+x_m) \big| \big|\prod_{j=2}^{m}{\big|\psi(x_1-x_j) \big| }}dx_1\dots dx_m \Big)^{1/2}
\end{equation*} and we perform another change of variables 
\begin{equation*}
y_1:=x_1+\dots+x_m, \qquad \text{ and }\qquad y_j:=x_1-x_j, \quad 2\leq j\leq m,
\end{equation*}
which is equivalent to 
\begin{equation*}
x_1=\frac{1}{m}\sum_{l=1}^{m}{y_l}, \qquad \text{ and }\qquad  x_j=\frac{1}{m}\sum_{l=1}^{m}{(y_l-y_j)}, \quad 2\leq j\leq m,
\end{equation*}
to obtain that the preceding expression is controlled by a constant times
\begin{align*}
&\Big(\int_{\rd}{\big(1+4\pi^2|y_1|^2\big)^{s_1+\dots+s_m}\big|K^{(m)}(y_1) \big|^2}dy_1 \Big)^{1/2}\\
&\relphantom{=}\times \prod_{j=2}^{m}{\Big( \int_{\rd}{\big(1+4\pi^2|y_j|^2 \big)^{s_1+\dots+s_m}\big|\psi(y_j) \big|^2}dy_j\Big)^{1/2}}\\
& \lesssim \Big(\int_{\rd}{\big(1+4\pi^2|y|^2\big)^{s_1+\dots+s_m}\big|\mathcal{H}^{(m)}\ast \psi(y) \big|^2}dy \Big)^{1/2}
\end{align*}
where we applied the triangle inequality
\begin{equation*}
\big(1+4\pi^2|x_j|^2 \big)^{s_j}\lesssim \big( 1+4\pi^2|y_1|^2\big)^{s_j}\cdots\big( 1+4\pi^2|y_m|^2\big)^{s_j} \qquad \text{ for each }~ 1\leq j\leq m. 
\end{equation*}
Since $\mathcal{H}^{(m)}\ast \psi(y)\lesssim \mathcal{H}^{(m)}(y)$, we conclude that
\begin{align*}
\mathcal{L}_{(s_1,\dots,s_n)}^{2,(n)}[\sigma]&\lesssim \Big( \int_{\rd}{\big( 1+4\pi^2|y|^2\big)^{s_1+\dots+s_m}\big| \mathcal{H}^{(m)}(y)\big|^2}dy\Big)^{1/2}\\
&=\Big( \int_{\rd}{\frac{1}{(1+4\pi^2|y|^2)^{d/2}}\frac{1}{(1+\ln{(1+4\pi^2|y|^2)})^{\tau}}}dy\Big)^{1/2}\lesssim 1
\end{align*} since $\tau>1$.

To achieve 
\begin{equation}\label{achieve}
\Vert T_{\sigma}\Vert_{H^{p_1}\times \dots\times H^{p_n}\to L^p}=\infty,
\end{equation}
let
\begin{equation*}
h^{(j)}(x):=\frac{1}{(1+4\pi^2|x|^2)^{\frac{1}{2}\frac{d}{p_j}}}\frac{1}{(1+\ln{(1+4\pi^2|x|^2)})^{\frac{\tau_j}{2}}}, \qquad m+1\leq j\leq n
\end{equation*}
and we define 
\begin{equation*}
f_1(x)=\dots=f_m(x)=2^d\wt{\psi}(2x)e^{2\pi i\langle x,\mu_1\rangle},
\end{equation*}
\begin{equation*}
f_j(x):=h^{(j)}\ast \psi(x)e^{2\pi i\langle x,\mu_1\rangle}, \quad m+1\leq 1\leq n.
\end{equation*}
Clearly, $\Vert f_j\Vert_{H^{p_j}(\rd)}\lesssim 1$ for $1\leq j\leq m$ and
\begin{equation*}
\Vert f_j\Vert_{H^{p_j}}\approx \Vert f_j\Vert_{L^{p_j}(\rd)}\lesssim \big\Vert h^{(j)}\big\Vert_{L^{p_j}(\rd)}\lesssim 1 \quad m+1\leq j\leq n
\end{equation*} since $\tau_jp_j>2$.

On the other hand, using (\ref{supportm}) and the facts that $\psi\ast \wt{\psi}=\psi$ and
\begin{equation*}
\wh{f_j}(\xi)=1 \qquad \text{ for }~~ |\xi-\mu_1|\leq \frac{1}{100n} ~ \text{ and }~  ~ 1\leq j\leq m,
\end{equation*}
we see that
\begin{equation*}
\sigma(\xi_1,\dots,\xi_n)\wh{f_1}(\xi_1)\cdots\wh{f_n}(\xi_n)=M^{(m)}(\xi_1,\dots,\xi_m)\wh{f_{m+1}}(\xi_{m+1})\cdots\wh{f_n}(\xi_{n}),
\end{equation*} which implies that
\begin{equation*}
T_{\sigma}\fff (x)= \big( M^{(m)}\big)^{\vee}(x,\dots,x)f_{m+1}(x)\cdots f_{n}(x).
\end{equation*}
Since $\mathcal{H}^{(m)}$, $h^{(j)}$, and $\psi$ are nonnegative functions,
\begin{equation*}
\big|T_{\sigma}\fff(x)\big|=m \mathcal{H}^{(m)}\ast \psi(mx)\psi(0)^{m-1}\prod_{j=m+1}^{n}{\big( h^{(j)}\ast\psi(x)\big)}
\end{equation*}
where (\ref{fmest}) is applied. Now, using the fact that 
\begin{equation*}
\mathcal{H}^{(m)}(x-y)\geq \mathcal{H}^{(m)}(x)\mathcal{H}^{(m)}(y),
\end{equation*} 
\begin{equation*}
{h}^{(j)}(x-y)\geq {h}^{(j)}(x){h}^{(j)}(y),\qquad m+1\leq j\leq n,
\end{equation*} 
we obtain that
\begin{align*}
&\big\Vert T_{\sigma}\fff(x)\big\Vert_{L^p(\rd)}\gtrsim \Big\Vert \mathcal{H}^{(m)}(m\cdot)\prod_{j=m+1}^{n}{h^{(j)}}\Big\Vert_{L^p(\rd)}\\
&\gtrsim \Big( \int_{\rd}{\frac{1}{(1+4\pi^2|x|^2)^{d/2}}\frac{1}{(1+\ln{(1+4\pi^2|x|^2)})^{\frac{p}{2}(\tau+\tau_{m+1}+\dots+\tau_n)}}}dx\Big)^{1/p}
\end{align*}
where the second inequality is due to (\ref{equivcondition}).
Since $\tau+\tau_{m+1}+\dots+\tau_n<2/p$, which follows from (\ref{taucondition}), the preceding expression diverges and this completes the proof of (\ref{achieve}).

When $m=n$, the exactly same argument is applicable with $1<\tau<n+1<\frac{2}{p}$, $\sigma:=M^{(n)}$, and $f_j(x):=2^d\wt{\psi}(2x)e^{2\pi i\langle x,\mu_1\rangle}$ for $1\leq j\leq n$. Since the proof is just a repetition, we omit the details.

This ends the proof.

\section*{Acknowledgement}

{Part of this research was carried out during my stay at the University of Missouri-Columbia. I would like to thank L. Grafakos for his invitation, hospitality, and very useful discussions during the stay. I also would like to express gratitude to the anonymous referee for the careful reading and suggestions.  }

\end{document}